\documentclass[letterpaper,11pt]{article}
 


\usepackage{amsmath,amsthm}
\usepackage{amsfonts}
\usepackage{latexsym}
\usepackage{amssymb,bm}
\usepackage{mathrsfs}
\usepackage{mathtools}
\usepackage[page]{appendix}
\usepackage{float}
\usepackage{changes}

\usepackage{graphicx}
\usepackage{epstopdf}

 \usepackage{grffile} 


\usepackage{changes}

\usepackage{color}
\usepackage{color}


\marginparwidth 0pt
\oddsidemargin  0pt
\evensidemargin  0pt
\marginparsep 0pt

\topmargin   -.5in
\hoffset -0.1in
\textwidth   6.6in
\textheight  8.6 in


\newtheorem{theorem}{Theorem}[section]

\newtheorem{proposition}[theorem]{Proposition}

\newtheorem{algorithm}{Algorithm}





\newcommand{\inner}[2]{\langle #1,#2\rangle}

\newcommand{\norm}[1]{\|{#1}\|}

\newcommand{\tos}{\rightrightarrows}

\newcommand{\comenta}[1]{}



\newcommand{\HH}{\mathcal{H}}


\newcommand{\mgap}{\vspace{.1in}}



\begin{document}

\title{On the convergence rate of the scaled proximal decomposition on the graph of a maximal
monotone operator (SPDG) algorithm}
\author{
    M. Marques Alves
\thanks{
Departamento de Matem\'atica,
Universidade Federal de Santa Catarina,
Florian\'opolis, Brazil, 88040-900 ({\tt maicon.alves@ufsc.br}).
The work of this author was partially supported by CNPq grants no.
406250/2013-8, 306317/2014-1 and 405214/2016-2.}
\and
Samara Costa Lima
\thanks{
Departamento de Matem\'atica,
Universidade Federal de Santa Catarina,
Florian\'opolis, Brazil, 88040-900 ({\tt samaraclim@outlook.com}).
The work of this author was partially supported by CAPES Scholarship  no. 201302180.
}
}


\maketitle

\begin{abstract}
Relying on fixed point techniques, Mahey, Oualibouch and Tao  
introduced in \cite{mah.qua.tao-prox.siam95}
the scaled proximal decomposition on the graph of a maximal monotone  operator (SPDG) algorithm and analyzed its performance on inclusions for strongly monotone and Lipschitz continuous operators. The SPDG algorithm generalizes the Spingarn's partial inverse method by allowing scaling factors,  a key strategy to speed up the convergence of numerical algorithms. 

In this note, we show that the SPDG algorithm can alternatively be analyzed by means of the original Spingarn's partial inverse framework, tracing back to the 1983 Spingarn's paper. 
We simply show that under the assumptions considered in \cite{mah.qua.tao-prox.siam95}, the Spingarn's partial inverse 
of the underlying maximal monotone operator is strongly monotone,  which allows 
one to employ recent results on the convergence and iteration-complexity of proximal point type methods for strongly monotone operators. By doing this,  we additionally obtain a potentially faster convergence for the SPDG algorithm and a more accurate upper bound on the number of iterations needed to achieve prescribed tolerances,  specially on ill-conditioned problems.  
\\
\\ 
  2000 Mathematics Subject Classification: 90C25, 90C30, 47H05.
 \\
 \\
  Key words: SPDG algorithm, partial inverse method, strongly monotone operator, rate of convergence, complexity. 
\end{abstract}

\pagestyle{plain}


\section*{Introduction}
In his 1983 paper, J. E. Spingarn~\cite{spi-par.amo83} posed the problem of finding $x,u\in \HH$ such that
\begin{align}
 \label{eq:pi}
 x\in V, \quad u\in V^\perp\;\;\mbox{and}\;\; u\in T(x)
\end{align}
where $T$ is a (point-to-set) maximal monotone operator on the Hilbert space $\HH$ and $V$ is a closed vector subspace. 
Problem \eqref{eq:pi} encompasses different problems in applied mathematics and optimization including  minimization of convex functions over subspaces and decomposition problems for the sum of finitely 
many maximal monotone operators (see, e.g., \cite{com-pri.ol14,bur.sag.sch-ine.oms06,ouo-eps.mp04,spi-par.amo83}). 
Spingarn also proposed and studied in~\cite{spi-par.amo83} a proximal point type method -- the Spingarn's partial inverse method -- for solving \eqref{eq:pi}. This is an iterative scheme, which consists essentially of the Rockafellar's proximal point (PP) method  \cite{roc-mon.sjco76} for solving
the inclusion problem defined by the partial inverse of the underlying maximal monotone operator.
Since then, the Spingarn's partial inverse method has been used by many authors as a framework for the design
and analysis of different practical algorithms in optimization and related applications.

One of the problems that arises in Spingarn's approach is the difficulty of dealing with scaling, a key strategy to speed up the 
convergence rate of numerical algorithms.  Relying on fixed point techniques,  Mahey, Oualibouch and 
Tao \cite{mah.qua.tao-prox.siam95} circumvented this problem by 
proposing and analyzing the scaled proximal decomposition on the graph of a maximal monotone 
operator (SPDG) algorithm for solving \eqref{eq:pi}.
In contrast to the Spingarn's partial inverse method, the SPDG algorithm allows
for a scaling of the problem, for which the impact on speeding up its convergence rate 
was observed in \cite{mah.qua.tao-prox.siam95} on ill-conditioned quadratic programming 
problems. 
The analysis of the SPDG algorithm presented in \cite{mah.qua.tao-prox.siam95} consists
of reformulating it as a fixed point iteration  for a certain fixed point operator. This differs
from the original Spingarn's approach in~\cite{spi-par.amo83}, which is supported on the partial inverse of
monotone operators, a concept introduced and 
coined by Spingarn himself. 
Moreover, one of the main contributions of Mahey, Oualibouch and 
Tao is the convergence rate analysis of the SPDG algorithm under strong monotonicity.
In \cite[Theorem 4.2]{mah.qua.tao-prox.siam95}, they proved that
if the maximal monotone operator $T$ in \eqref{eq:pi} is strongly maximal monotone and Lipschitz continuous, then 
the SPDG algorithm converges  at a linear rate to the (unique, in this case) solution of \eqref{eq:pi}.  

In this note, we show that the SPDG algorithm can  alternatively be analyzed within 
the Spingarn's partial inverse framework, tracing back to the original Spingarn's approach. 
We simply show that under the assumptions considered in \cite{mah.qua.tao-prox.siam95}, the Spingarn's partial inverse 
of the underlying maximal monotone operator is strongly monotone,  which allows 
one to employ recent results on the convergence and iteration-complexity of proximal point type methods for strongly monotone operators.
By doing this,  we additionally obtain a potentially faster convergence for the SPDG algorithm and a more accurate upper bound on the number of iterations needed to achieve prescribed tolerances,  specially on ill-conditioned problems.

\mgap
\noindent
This work is organized as follows. Section \ref{sec:pre} presents the basic notation 
to be used in this note and some preliminaries results.
In Section \ref{sec:str}, we present our main contributions, namely the convergence
rate analysis of the SPDG algorithm within the Spingarn's partial inverse framework. Section \ref{sec:cr} is devoted
to some concluding remarks.


\section{Notation and basic results}
\label{sec:pre}

Let $\HH$ be a real Hilbert space with
inner product $\inner{\cdot}{\cdot}$ and induced norm $\|\cdot\|=\sqrt{\inner{\cdot}{\cdot}\textbf{}}$. 
A set-valued map $T:\HH\tos \HH$ is said to be a \emph{monotone operator} if
$\inner{z-z'}{v-v'}\geq 0$ for all $v\in T(z)$ and $v'\in T(z')$. On the other hand, $T:\HH\tos \HH$ is 
\emph{maximal monotone} if $T$ is monotone and its graph 
$G(T):=\{(z,v)\in \HH\times \HH\,|\,v\in T(z)\}$ is not properly contained in the graph of any other 
monotone operator on $\HH$. The inverse of $T:\HH\tos \HH$ is $T^{-1}:\HH\tos \HH$, defined at any
$z\in \HH$ by $v\in T^{-1}(z)$ if and only if $z\in T(v)$. The resolvent of a maximal monotone operator
$T:\HH\tos \HH$ is $(T+I)^{-1}$ and $z=(T+I)^{-1}x$ if and only if $x-z\in T(z)$. The operator 
$\gamma T:\HH\tos \HH$, where $\gamma>0$, is defined by $(\gamma T)z:=\gamma T(z):=\{\gamma v\,|\,v\in T(z)\}$.
A maximal monotone operator $T:\HH\tos \HH$ is said to be \emph{$\mu$-strongly monotone} if $\mu>0$
and $\inner{z-z'}{v-v'}\geq \mu\norm{z-z'}^2$ for all $v\in T(z)$ and $v'\in T(z')$.

The \emph{Spingarn's partial inverse}~\cite{spi-par.amo83} of a maximal monotone operator
$T:\HH\tos \HH$
with respect to a closed subspace $V$ of $\HH$ is the (maximal monotone) operator 
$T_V:\HH\tos \HH$ whose graph is
\begin{align}
 \label{eq:ema5}
 \mbox{G}(T_V):=\{(z,v)\in \HH\times \HH\;|\; P_V(v)+P_{V^\perp}(z)\in T(P_V(z)+P_{V^\perp}(v))\},
\end{align}
where $P_V$ and $P_{V^\perp}$ stand for the orthogonal projection onto
$V$ and $V^\perp$, respectively.

In the next subsection, we present the convergence rate of the proximal point algorithm for finding zeroes of
strongly monotone inclusions.

\subsection{On the proximal point method for strongly monotone operators}
\label{sec:hpe.str}

In this subsection, we 
consider the problem
\begin{align}
\label{eq:inc.p}
 0\in A(z)
\end{align}
where $A:\HH\tos \HH$ is a 
$\mu$-strongly maximal monotone operator, for some $\mu>0$, i.e., $A$ is maximal monotone and there exists $\mu>0$ such that
\begin{align}
 \inner{z-z'}{v-v'}\geq \mu\norm{z-z'}^2\quad \forall v\in A(z), v'\in A(z').
\end{align}
The proximal point (PP) method is an iterative scheme for finding approximate solutions of \eqref{eq:inc.p} (under the
assumption of maximal monotonicity of $A$). The method traces back to the work of Martinet and was popularized by 
Rockafellar, who formulated and studied some of its inexact variants along with important applications in 
convex programming~\cite{MR0418919,roc-mon.sjco76}.

Among the many inexact variants of the PP method, the hybrid proximal 
extragradient (HPE) method of  Solodov and Svaiter~\cite{sol.sva-hyb.svva99} has been the object of intense study 
by many authors (see, e.g.,~\cite{alv.mon.sva-reg.siam16,bot.cse-hyb.nfao15,cen.mor.yao-hyb.jota10,eck.sil-pra.mp13,he.mon-acc.siam16,ius.sos-pro.opt10,lol.par.sol-cla.jca09,mon.ort.sva-imp.coap14,mon.ort.sva-ada.coap16,MonSva10-2,sol.sva-hyb.svva99,sol.sva-ine.mor00,Sol-Sv:hy.unif}). 
One of its distinctive features is that it allows for
inexact solution of the corresponding subproblems within relative errors tolerances. 
Contrary to the (asymptotic) convergence analysis of the HPE method, which was originally established in \cite{sol.sva-hyb.svva99}, its
iteration-complexity was studied only recently by Monteiro and Svaiter in~\cite{mon.sva-hpe.siam10}. Motivated by the main
results on pointwise and ergodic iteration-complexity which were obtained in the latter reference,  nonasymptotic convergence rates of
the HPE method for solving strongly monotone inclusions were analyzed in~\cite{alv.mon.sva-reg.siam16}.

In this subsection, we specialize the main
result in latter reference, regarding the iteration-complexity of the HPE method for solving strongly monotone inclusions, to the exact  PP method for solving \eqref{eq:inc.p}.
This is motivated by the fact that under certain conditions on the maximal monotone operator $T$, its partial 
inverse $T_V$ -- with respect to a closed subspace $V$ --  is strongly maximal monotone (see Proposition \ref{pr:tvstr}).

\mgap

Next, we present the version of the PP method that we need in this work.

\mgap

\noindent
\fbox{
\addtolength{\linewidth}{-2\fboxsep}%
\addtolength{\linewidth}{-2\fboxrule}%
\begin{minipage}{\linewidth}
\begin{algorithm}
\label{shpe}
{\bf Proximal point (PP) method for solving (\ref{eq:inc.p})}
\end{algorithm}
\begin{itemize}
\item[(0)] Let $z_0\in \HH$ and $\lambda>0$ be given and set $k=1$.
\item[(1)] Compute
       \begin{equation}\label{eq:ec}
         z_k=(\lambda A+I)^{-1}z_{k-1}.    
    \end{equation}
\item[(2)] Let $k\leftarrow k+1$ and go to step 1.
\end{itemize}
\noindent
\end{minipage}
}

\mgap
\mgap

The PP method consists in  successively applying the resolvent $(\lambda A+I)^{-1}$ of the operator $A$ (which is well defined due to the Minty theorem). As we mentioned earlier, there are  more general versions of Algorithm \ref{shpe} in which, in particular, the stepsize parameter $\lambda>0$ is allowed to vary along the iterations and $z_k$  is computed only inexactly in \eqref{eq:ec}.

The following result establishes the linear convergence of the PP method under strong monotonicity assumption on the operator $A$. Although it is a direct consequence of a more general 
result (see \cite[Proposition 2.2]{alv.mon.sva-reg.siam16}), here we present a short proof  for the convenience of the reader.

\begin{proposition}
\label{pr:mmm}
Assume that the operator $A$ is $\mu$-strongly maximal monotone, for some $\mu>0$. Let $\{z_k\}$  be generated by \emph{Algorithm \ref{shpe}} and let $z^*\in \HH$ is the (unique) solution
of \eqref{eq:inc.p}, i.e., $z^*=A^{-1}(0)$.
Then, for all $k\geq 1$, 
\begin{align}
 \label{eq:ema}
 \|z_{k-1}-z_k\|^2 &\le  \left (1-\dfrac{2\lambda\mu}{1+2\lambda\mu}\right)^{k-1}\|z^*-z_0\|^2,\\[2mm]
\label{eq:ema2}
\|z^*-z_k\|^2 &\le \left(1-\dfrac{2\lambda\mu}{1+2\lambda\mu}\right)^{k} \|z^*-z_0\|^2.
\end{align}
\end{proposition}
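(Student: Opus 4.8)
The plan is to combine the resolvent identity underlying \eqref{eq:ec} with the strong monotonicity of $A$ to obtain a one-step contraction, and then to iterate it. First I would reformulate the update: since $z_k=(\lambda A+I)^{-1}z_{k-1}$, the defining property of the resolvent recalled in Section \ref{sec:pre} gives $z_{k-1}-z_k\in\lambda A(z_k)$, equivalently $(z_{k-1}-z_k)/\lambda\in A(z_k)$. Since $z^*=A^{-1}(0)$ we also have $0\in A(z^*)$. Feeding the pairs $\big(z_k,(z_{k-1}-z_k)/\lambda\big)$ and $(z^*,0)$ into the $\mu$-strong monotonicity inequality and multiplying through by $\lambda>0$ yields
\[
  \Inner{z_k-z^*}{z_{k-1}-z_k}\ge\lambda\mu\,\norm{z_k-z^*}^2 .
\]

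Next I would turn this into a recursion between consecutive distances to $z^*$. Writing $z_{k-1}-z^*=(z_k-z^*)+(z_{k-1}-z_k)$ and expanding the square gives
\[
  \norm{z_{k-1}-z^*}^2=\norm{z_k-z^*}^2+2\,\Inner{z_k-z^*}{z_{k-1}-z_k}+\norm{z_{k-1}-z_k}^2 ,
\]
and substituting the inner-product bound above produces the key estimate
\begin{equation*}
  \norm{z_{k-1}-z^*}^2\ge(1+2\lambda\mu)\,\norm{z_k-z^*}^2+\norm{z_{k-1}-z_k}^2. \tag{$\star$}
\end{equation*}

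From $(\star)$ both claimed bounds follow by elementary bookkeeping. Discarding the nonnegative term $\norm{z_{k-1}-z_k}^2$ and using $1/(1+2\lambda\mu)=1-2\lambda\mu/(1+2\lambda\mu)$ gives the one-step contraction $\norm{z_k-z^*}^2\le\big(1-2\lambda\mu/(1+2\lambda\mu)\big)\norm{z_{k-1}-z^*}^2$; an immediate induction on $k$ then yields \eqref{eq:ema2}. For \eqref{eq:ema}, I would instead discard the term $(1+2\lambda\mu)\norm{z_k-z^*}^2$ in $(\star)$ to obtain $\norm{z_{k-1}-z_k}^2\le\norm{z_{k-1}-z^*}^2$, and then bound the right-hand side by applying \eqref{eq:ema2} with $k$ replaced by $k-1$.

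I do not expect a real obstacle: this is the standard Fej\'er-type contraction argument specialized to the exact proximal step, and the uniqueness of $z^*$ is itself a direct consequence of strong monotonicity. The only places deserving care are the elementary algebra that recasts the factor $1/(1+2\lambda\mu)$ in the stated form $1-2\lambda\mu/(1+2\lambda\mu)$, and the correct tracking of the exponents $k$ versus $k-1$ in the two estimates.
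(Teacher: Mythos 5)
Your proposal is correct and follows essentially the same argument as the paper: both rest on the resolvent inclusion $\lambda^{-1}(z_{k-1}-z_k)\in A(z_k)$ together with $0\in A(z^*)$, strong monotonicity, and the expansion of $\norm{z_{k-1}-z^*}^2$, yielding the identical contraction factor $1-2\lambda\mu/(1+2\lambda\mu)$ and the same derivation of \eqref{eq:ema} from \eqref{eq:ema2} at index $k-1$. The only cosmetic difference is that the paper packages the final step through the elementary minimization $\min\{r^2+as^2\,|\,r,s\geq 0,\ r+s\geq b\}=a(a+1)^{-1}b^2$, whereas you drop the nonnegative term $\norm{z_{k-1}-z_k}^2$ directly from your inequality $(\star)$, arriving at exactly the same bounds.
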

\begin{proof}
The desired results follow from the following 
\begin{align*}
 \norm{z^*-z_{k-1}}^2-\norm{z^*-z_{k}}^2&=\norm{z_{k-1}-z_k}^2+2\lambda \inner{\lambda^{-1}(z_{k-1}-z_k)}{z_k-z^*}\\
       &\geq  \norm{z_{k-1}-z_k}^2+2\lambda \mu\norm{z_k-z^*}^2\\
      &\geq \dfrac{2\lambda\mu}{1+2\lambda\mu}\norm{z^*-z_{k-1}}^2,
\end{align*}
where we have used that $\lambda^{-1}(z_{k-1}-z_k)\in A(z_k)$, from \eqref{eq:ec},  $0\in A(z^*)$,
$A$ is $\mu$-strongly monotone and $\min\{r^2+ a s^2\,|\,r,s\geq 0, r+s\geq b\geq 0\}=a(a+1)^{-1}b^2$.
%
 %
%
%
%
%
%
\end{proof}

As we mentioned earlier, in the next section we analyze the global (nonasymptotic) convergence rate of the
SPDG algorithm under the assumptions that the operator $T$ in \eqref{eq:pi} is strongly monotone and 
Lipschitz continuous~\cite{mah.qua.tao-prox.siam95}. We will prove, in particular, that under such conditions on $T$,
the partial inverse $T_V$ is strongly monotone as well, in which case the results in \eqref{eq:ema} and \eqref{eq:ema2} can be applied.

\section{On the convergence rate of the SPDG algorithm}
\label{sec:str}

In this section, we consider problem \eqref{eq:pi}, i.e., the problem of finding $x,u\in \HH$ such that
\begin{align}
  \label{eq:inc.v}
  x\in V,\quad  u\in V^\perp\;\;\mbox{and}\;\;u\in T(x), 
\end{align}
where the following  hold:
\begin{itemize}
 \item[A1)] $V$ is a closed vector subspace of $\HH$.
\item[A2)]  $T:\HH\tos \HH$ is (maximal) $\eta$-strongly monotone, i.e., $T$ is maximal monotone and
there exists $\eta>0$ such that
\begin{align}
 \label{eq:str}
 \inner{z-z'}{v-v'}\geq \eta\norm{z-z'}^2 \qquad \forall v\in T(z),v'\in T(z').
\end{align}
\item[A3)] $T:\HH\tos \HH$ is $L$-Lipschitz continuous, i.e.,
there exists $L>0$ such that
\begin{align}
 \label{eq:Lip}
 \norm{v-v'}\leq L\norm{z-z'} \qquad \forall v\in T(z), v'\in T(z').
\end{align}
\end{itemize}

The convergence rate of the  SPDG algorithm under the assumptions A2) and A3) was previously analyzed in \cite[Theorem 4.2]{mah.qua.tao-prox.siam95}. 
Note that \eqref{eq:str} and \eqref{eq:Lip}  imply, in 
particular, that the operator  $T$ is at most single-valued and $L\geq \eta$. The number $L/\eta\geq 1$ is known 
in the literature as the condition number of the
problem \eqref{eq:inc.v}.
The influence of $L/\eta$ as well as of scaling factors in the convergence speed of the SPDG algorithm, specially for solving ill-conditioned problems, was discussed in 
\cite[Section 4]{mah.qua.tao-prox.siam95} for the special case of quadratic programming.

In this section, analogously to the latter reference, we analyze the convergence rate of the SPDG algorithm (Algorithm \ref{spin}) under assumptions A2) and A3) on the
maximal monotone operator $T$. We show that the SPDG algorithm falls in the framework of the PP method (Algorithm \ref{shpe}) for
the (scaled) partial inverse of $T$, which, under assumptions A2) and A3), is shown to be strongly monotone. This contrasts to the
approach adopted in \cite{{mah.qua.tao-prox.siam95}}, which relies on fixed point techniques.
 By showing that the (scaled) partial inverse
of $T$ -- with respect to $V$-- is strongly monotone, we obtain a potentially faster convergence for the SPDG algorithm, when compared
to the one proved in \cite{mah.qua.tao-prox.siam95} by means of fixed point techniques.  Moreover, the convergence rates obtained in this note allows one to measure the convergence speed of the SPDG algorithm on three different measures of approximate solution
to the problem \eqref{eq:inc.v} (see Theorem \ref{th:4} and the remarks right below it). 

Among the above mentioned measures of approximate solution, one of them allows for the study of the iteration-complexity of the
SPDG algorithm in the lines of recent results on the iteration-complexity of the inexact Spingarn's partial inverse method~\cite{cos.alv-spin01} (see \eqref{eq:iter.com}).  
In this regard, one can compute SPDG algorithm's iteration-complexity with respect to the following notion of approximate solution of \eqref{eq:inc.v} (see~\cite{cos.alv-spin01}): for a given
tolerance $\rho>0$, find $x, u\in \HH$  such that
\begin{align}
 \label{eq:inc.v02}
  u\in T(x),\quad \max\left\{\norm{x-P_V(x)}, \gamma \norm{u-P_{V^\perp}(u)}\right\}\leq \rho,
\end{align}
where $\gamma>0$.
For $\rho=0$, criterion \eqref{eq:inc.v02} gives
$x\in V$, $u\in V^{\perp}$ and $u\in T(x)$, i.e., in this case the pair
$(x, u)$ is a solution of \eqref{eq:inc.v}. We mention that criterion \eqref{eq:inc.v02} naturally appears in different settings and
has not been considered in  \cite{{mah.qua.tao-prox.siam95}}.

\mgap

Next, we present the scaled proximal decomposition on the graph of a maximal monotone operator (SPDG) algorithm.

\mgap

\noindent
\fbox{
\addtolength{\linewidth}{-2\fboxsep}%
\addtolength{\linewidth}{-2\fboxrule}%
\begin{minipage}{\linewidth}
\begin{algorithm}
\label{spin}
{\bf SPDG algorithm for solving \bf{(\ref{eq:inc.v})}~\cite[Algorithm 3]{mah.qua.tao-prox.siam95}}
\end{algorithm}
\begin{itemize}
\item[(0)] Let $x_0\in V$, $y_0\in V^\perp$,  $\gamma>0$ be given and set $k=1$.
\item [(1)] Compute
\begin{align}
\label{eq:spin}
 \tilde x_k=(\gamma T+I)^{-1}\left(x_{k-1}+\gamma y_{k-1}\right),\qquad 
u_k=\gamma^{-1}\left(x_{k-1}+\gamma y_{k-1}-\tilde x_k\right).  
\end{align} 
\item[(2)] If $\tilde x_k\in V$ and $u_k\in V^\perp$, then stop. Otherwise, define 
 \begin{align}
  \label{eq:spin2}
   x_k=P_V(\tilde x_k),\quad y_k=P_{V^\perp}(u_k),   
 \end{align}
	set $k\leftarrow k+1$ and go to step 1.
   \end{itemize}
\noindent
\end{minipage}
} 
\mgap

\noindent
{\bf Remarks.}
\begin{itemize}
\item[(i)] 
Algorithm \ref{spin} was originally proposed and studied in \cite{mah.qua.tao-prox.siam95}. When
$\gamma=1$, it reduces to the Spingarn's partial inverse method for solving \eqref{eq:inc.v}. The authors of the latter reference emphasize the importance of introducing the scaling $\gamma>0$ in order to speed up the convergence of the SPDG algorithm, specially when solving ill-conditioned problems.
\item[(ii)] As we mentioned earlier, one of the contributions of this note is to show that similar results (actually potentially better) to
the one obtained in  \cite{mah.qua.tao-prox.siam95} regarding the convergence rate of Algorithm \ref{spin} can be proved by means
of the Spingarn's partial inverse framework, instead of fixed point techniques. 
\end{itemize}

The following result appears (with a different notation) inside the proof of \cite[Theorem 4.2]{mah.qua.tao-prox.siam95}.

\begin{theorem} \emph{(inside the proof of \cite[Theorem 4.2]{mah.qua.tao-prox.siam95})}
 \label{th:mot}
If  $T$ is $\eta$--strongly monotone and Lipschitz continuous with constant $L$, then the convergence of the sequence
$\{(x_k, \gamma y_k)\}$  is linear, in the sense that
\begin{align}
  \label{eq:ema10}
 \norm{x^*-x_k}^2+\gamma^2\norm{u^*-y_k}^2\leq 
  \left(1-\dfrac{2\gamma\eta}{(1+\gamma L)^2}\right)^k d_0^{\,2} \qquad \forall k\geq 1,
\end{align}
where $(x^*,u^*)$ is the (unique) solution of \eqref{eq:inc.v} and
\begin{align}
 \label{eq:def.d0}
 d_{0}:=\sqrt{\|x^*-x_0\|^2+\gamma^2\|u^*-y_0\|^2}.
\end{align}
\end{theorem}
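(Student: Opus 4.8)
The plan is to recognize the SPDG iteration as nothing but the exact proximal point method of Algorithm \ref{shpe}, run with stepsize $\lambda=1$ on the scaled partial inverse $A:=(\gamma T)_V$, and then to feed this into Proposition \ref{pr:mmm}. The bridge is the Spingarn resolvent identity
\[
(S_V+I)^{-1}(z)=P_V\big((S+I)^{-1}z\big)+P_{V^\perp}\big(z-(S+I)^{-1}z\big),
\]
valid for every maximal monotone $S:\HH\tos\HH$ and every $z\in\HH$, which I would verify directly from the graph definition \eqref{eq:ema5}: setting $w=(S+I)^{-1}z$ and $z^+=P_V(w)+P_{V^\perp}(z-w)$, a one-line computation of the two orthogonal components shows that the membership $z-z^+\in S_V(z^+)$ reduces to $z-w\in S(w)$. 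Taking $S=\gamma T$ and writing $z_{k-1}:=x_{k-1}+\gamma y_{k-1}$, the first relation in \eqref{eq:spin} reads $\tilde x_k=(\gamma T+I)^{-1}z_{k-1}$ and $\gamma u_k=z_{k-1}-\tilde x_k$; hence \eqref{eq:spin2} gives $x_k=P_V(\tilde x_k)$ and $\gamma y_k=P_{V^\perp}(z_{k-1}-\tilde x_k)$, so that $z_k:=x_k+\gamma y_k=(A+I)^{-1}(z_{k-1})$. Thus $\{z_k\}$ is exactly the PP sequence for $A$.

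Next I would prove that $A=(\gamma T)_V$ is $\mu$-strongly monotone with $\mu=\gamma\eta/(1+\gamma^2L^2)$. Given $(z_i,v_i)\in G(A)$ for $i=1,2$, put $s_i:=P_V(z_i)+P_{V^\perp}(v_i)$ and $t_i:=P_V(v_i)+P_{V^\perp}(z_i)$, so that $t_i\in\gamma T(s_i)$ by \eqref{eq:ema5}. Writing each vector in the orthogonal splitting $\HH=V\oplus V^\perp$, the $V$/$V^\perp$ cross terms cancel and the monotonicity pairing is preserved, $\inner{z_1-z_2}{v_1-v_2}=\inner{s_1-s_2}{t_1-t_2}$. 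Moreover $P_V(z_1-z_2)=P_V(s_1-s_2)$ and $P_{V^\perp}(z_1-z_2)=P_{V^\perp}(t_1-t_2)$, so nonexpansiveness of the projections together with the $\gamma L$-Lipschitz continuity of $\gamma T$ (assumption A3) gives $\norm{z_1-z_2}^2\le\norm{s_1-s_2}^2+\norm{t_1-t_2}^2\le(1+\gamma^2L^2)\norm{s_1-s_2}^2$. Combining this with the $\gamma\eta$-strong monotonicity of $\gamma T$ (assumption A2) yields $\inner{z_1-z_2}{v_1-v_2}=\inner{s_1-s_2}{t_1-t_2}\ge\gamma\eta\norm{s_1-s_2}^2\ge\mu\norm{z_1-z_2}^2$, which is the asserted strong monotonicity (maximal monotonicity of $A$ being Spingarn's theorem). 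This is, in essence, the content of Proposition \ref{pr:tvstr}.

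With this in hand, Proposition \ref{pr:mmm} applied to $A$ with $\lambda=1$ gives $\norm{z^*-z_k}^2\le\big(1-\tfrac{2\mu}{1+2\mu}\big)^k\norm{z^*-z_0}^2$, where $z^*=A^{-1}(0)$. Unwinding $0\in A(z^*)$ through \eqref{eq:ema5} shows that $x^*:=P_V(z^*)\in V$, $u^*:=\gamma^{-1}P_{V^\perp}(z^*)\in V^\perp$ and $u^*\in T(x^*)$, so $z^*=x^*+\gamma u^*$; since $x_k,x^*\in V$ while $\gamma y_k,\gamma u^*\in V^\perp$, orthogonality gives $\norm{z^*-z_k}^2=\norm{x^*-x_k}^2+\gamma^2\norm{u^*-y_k}^2$ and $\norm{z^*-z_0}^2=d_0^2$. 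Finally I would compare the two contraction factors: a short computation gives $\tfrac{2\mu}{1+2\mu}=\tfrac{2\gamma\eta}{1+\gamma^2L^2+2\gamma\eta}$, and since $\eta\le L$ (which follows from A2)--A3)) one has $1+\gamma^2L^2+2\gamma\eta\le(1+\gamma L)^2$, whence $1-\tfrac{2\mu}{1+2\mu}\le 1-\tfrac{2\gamma\eta}{(1+\gamma L)^2}$. This delivers exactly the bound \eqref{eq:ema10}---in fact the sharper middle estimate, strictly better whenever $\eta<L$.

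The main obstacle is the strong-monotonicity estimate of the second paragraph. The pairing-preservation identity is the clean structural heart of the argument and is essentially bookkeeping in $V\oplus V^\perp$; the delicate point is the norm-inflation bound $\norm{z_1-z_2}^2\le(1+\gamma^2L^2)\norm{s_1-s_2}^2$, which is the only place the Lipschitz hypothesis A3) enters and which alone pins down the modulus $\mu$ and hence the final rate. I expect the verification of the Spingarn resolvent identity and the translation of norms back to the $(x_k,y_k)$ variables to be routine by comparison.
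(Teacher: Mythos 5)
Your proposal is correct, but it does not follow the proof that the theorem's attribution points to: Theorem \ref{th:mot} is quoted from the Mahey--Oualibouch--Tao paper, where it is proved by fixed point techniques, i.e., by showing directly that the SPDG iteration map is a contraction with the factor in \eqref{eq:ema10}. What you do instead is reconstruct the machinery that constitutes this paper's own contribution and then relax the resulting sharper bound. Concretely: your Spingarn resolvent identity $(S_V+I)^{-1}(z)=P_V(w)+P_{V^\perp}(z-w)$, $w=(S+I)^{-1}z$, is a repackaging of Proposition \ref{pr:pi.e.hpe} (the paper verifies $z_k=((\gamma T)_V+I)^{-1}z_{k-1}$ directly from the update formulas rather than stating the identity for general $S$, but both are the same bookkeeping in $V\oplus V^\perp$); your strong monotonicity estimate with modulus $\mu=\gamma\eta/(1+\gamma^2L^2)$ is Proposition \ref{pr:tvstr} applied to $\gamma T$ (the paper adds the two inequalities $\inner{z-z'}{v-v'}\geq\eta\norm{P_V(z-z')}^2$ and $L^2\inner{z-z'}{v-v'}\geq\eta\norm{P_{V^\perp}(z-z')}^2$, while you bound $\norm{z_1-z_2}^2\leq(1+\gamma^2L^2)\norm{s_1-s_2}^2$ in one pass; the content is identical); and feeding this into Proposition \ref{pr:mmm} with $\lambda=1$ yields exactly the paper's Theorem \ref{th:4}, with contraction factor $1-2\gamma\eta/\left((1+\gamma L)^2-2\gamma(L-\eta)\right)$. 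Your closing observation that $(1+\gamma L)^2-2\gamma(L-\eta)=1+\gamma^2L^2+2\gamma\eta\leq(1+\gamma L)^2$ because $\eta\leq L$ (itself a consequence of A2)--A3) via Cauchy--Schwarz) correctly recovers \eqref{eq:ema10} from \eqref{eq:ema12}; this is precisely the comparison the paper makes in remark (ii) after Theorem \ref{th:4}. So you have in fact proved a strictly stronger statement whenever $\eta<L$. What the original fixed-point argument buys is a self-contained derivation of \eqref{eq:ema10} without introducing the partial inverse at all; what your (and the paper's) partial-inverse route buys is the sharper rate, the identification of SPDG as an exact proximal point iteration on $(\gamma T)_V$, and thereby access to complexity results such as \eqref{eq:iter.com}.
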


\noindent
{\bf Remarks.}
\begin{itemize}
\item[(i)]
 The optimal convergence speed  is achieved  by letting $\gamma=1/L$  in \eqref{eq:ema10}, in which case
(see p. 461 in \cite{mah.qua.tao-prox.siam95})
\begin{align}
  \label{eq:ema100}
   \norm{x^*-x_k}^2+\gamma^2\norm{u^*-y_k}^2\leq
   \left(1-\dfrac{\eta}{2L}\right)^k d_0^{\,2} \qquad \forall k\geq 1.
\end{align}
\item[(ii)] It follows from \eqref{eq:ema100} that, for a given tolerance $\rho>0$, Algorithm \ref{spin} finds $x,y\in \HH$ such that 
$\norm{x^*-x}^2+\gamma^2\norm{u^*-y}^2\leq \rho$ after performing at most
\begin{align}
 \label{eq:ema500}
 2+\log\left(\dfrac{2L}{2L-\eta}\right)^{-1}\log\left(\dfrac{d_0^{\,2}}{\rho}\right)
\end{align}
iterations. In the third remark after Theorem \ref{th:4}, we show that our approach provides a potentially better upper bound
on the number of iterations needed by the SPDG algorithm to achieve prescribed tolerances, specially on ill-conditioned problems.
\end{itemize}

A direct consequence of the next proposition, in contrast to the reference \cite{mah.qua.tao-prox.siam95}, is that  it is possible to analyze the SPDG algorithm within the original Spingarn's partial inverse framework.
%
We show that under assumptions A2) and A3), the partial inverse operator $T_V$ is strongly monotone.

\begin{proposition}
 \label{pr:tvstr}
 Under the assumptions \emph{A2)} and \emph{A3)} on the maximal monotone operator $T$, its partial inverse
$T_V$ with respect to $V$ is $\mu$-strongly (maximal) monotone with 
\begin{align}
 \label{eq:def.mu}
\mu=\dfrac{\eta}{1+L^2}>0.
\end{align}
\end{proposition}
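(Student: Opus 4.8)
The plan is to exploit the change of variables built into the definition \eqref{eq:ema5} of the partial inverse. First I would fix two pairs $(z,v),(z',v')\in G(T_V)$ and pass to the associated points on the graph of $T$: set $x=P_V(z)+P_{V^\perp}(v)$, $u=P_V(v)+P_{V^\perp}(z)$, and similarly $x',u'$, so that \eqref{eq:ema5} gives $u\in T(x)$ and $u'\in T(x')$. Writing $\Delta z=z-z'$, $\Delta v=v-v'$, $\Delta x=x-x'$, $\Delta u=u-u'$, linearity of the projections yields
\[
\Delta x=P_V(\Delta z)+P_{V^\perp}(\Delta v),\qquad \Delta u=P_V(\Delta v)+P_{V^\perp}(\Delta z).
\]
Throughout I will use only that $P_V,P_{V^\perp}$ are self-adjoint idempotents onto mutually orthogonal subspaces, so that all mixed terms $\inner{P_V(\cdot)}{P_{V^\perp}(\cdot)}$ vanish.

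The first key step is to observe that the partial inverse preserves the inner product of increments. Expanding both sides in their $V$- and $V^\perp$-components and discarding the vanishing cross terms, one finds
\[
\inner{\Delta z}{\Delta v}=\inner{P_V(\Delta z)}{P_V(\Delta v)}+\inner{P_{V^\perp}(\Delta z)}{P_{V^\perp}(\Delta v)}=\inner{\Delta x}{\Delta u}.
\]
Since $u\in T(x)$ and $u'\in T(x')$, the strong monotonicity assumption A2) (i.e.\ \eqref{eq:str}) then gives at once $\inner{\Delta z}{\Delta v}=\inner{\Delta x}{\Delta u}\ge \eta\norm{\Delta x}^2$.

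The second key step is to bound $\norm{\Delta z}$ from above by $\norm{\Delta x}$. For this I would invert the correspondence: applying $P_V$ to the expression for $\Delta x$ and $P_{V^\perp}$ to that for $\Delta u$ shows $P_V(\Delta x)=P_V(\Delta z)$ and $P_{V^\perp}(\Delta u)=P_{V^\perp}(\Delta z)$, hence $\Delta z=P_V(\Delta x)+P_{V^\perp}(\Delta u)$. By orthogonality of the two components and the fact that projections are nonexpansive,
\[
\norm{\Delta z}^2=\norm{P_V(\Delta x)}^2+\norm{P_{V^\perp}(\Delta u)}^2\le \norm{\Delta x}^2+\norm{\Delta u}^2,
\]
and the Lipschitz assumption A3) (i.e.\ \eqref{eq:Lip}) gives $\norm{\Delta u}\le L\norm{\Delta x}$, so $\norm{\Delta z}^2\le (1+L^2)\norm{\Delta x}^2$. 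Combining the two steps,
\[
\inner{\Delta z}{\Delta v}\ge \eta\norm{\Delta x}^2\ge \frac{\eta}{1+L^2}\norm{\Delta z}^2=\mu\norm{\Delta z}^2,
\]
which is exactly the asserted $\mu$-strong monotonicity with $\mu=\eta/(1+L^2)$. Maximal monotonicity of $T_V$ is already part of Spingarn's framework and can be invoked directly from \cite{spi-par.amo83}.

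I expect the only delicate point to be the projection bookkeeping, namely verifying the inner-product identity $\inner{\Delta z}{\Delta v}=\inner{\Delta x}{\Delta u}$ and the inversion $\Delta z=P_V(\Delta x)+P_{V^\perp}(\Delta u)$; once these two identities are established, the strong monotonicity and Lipschitz hypotheses enter immediately. There is no genuine analytic obstacle, only the risk of confusing which component ($V$ or $V^\perp$) is exchanged when passing between $G(T_V)$ and $G(T)$, so I would be careful to track the swap consistently in both directions.
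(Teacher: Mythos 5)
Your proof is correct and takes essentially the same route as the paper's: both arguments rest on the component-swap identity (your $\inner{\Delta z}{\Delta v}=\inner{\Delta x}{\Delta u}$, which is the paper's rearrangement of $\inner{z-z'}{v-v'}$ into $V$- and $V^\perp$-components) followed by A2) applied to the swapped points and A3) to control the $V^\perp$-part of $\Delta z$. Your direct bound $\norm{\Delta z}^2\le(1+L^2)\norm{\Delta x}^2$ via the inverse correspondence $\Delta z=P_V(\Delta x)+P_{V^\perp}(\Delta u)$ is merely a repackaging of the paper's step of adding the two inequalities $\inner{z-z'}{v-v'}\ge \eta\norm{P_V(z-z')}^2$ and $L^2\inner{z-z'}{v-v'}\ge \eta\norm{P_{V^\perp}(z-z')}^2$, so the difference is cosmetic, not substantive.
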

\begin{proof}
Take $v\in T_V(z)$, $v'\in T_V(z')$ and note that, from \eqref{eq:ema5}, we have
\begin{align}
 \label{eq:inc.ttv}
 P_V(v)+P_{V^\perp}(z)\in T(P_V(z)+P_{V^\perp}(v)),\quad 
P_V(v')+P_{V^\perp}(z')\in T(P_V(z')+P_{V^\perp}(v')),
\end{align}
which, in turn, combined with the assumption A2) and after some direct calculations yields
\begin{align}
 \nonumber
 \inner{z-z'}{v-v'}&=\inner{P_V(z-z')+P_{V^\perp}(z-z')}{P_V(v-v')+P_{V^\perp}(v-v')}\\
 \nonumber
                 &=\inner{P_V(z-z')+P_{V^\perp}(v-v')}{P_V(v-v')+P_{V^\perp}(z-z')}\\
 \nonumber
                 &\geq \eta\norm{P_V(z-z')+P_{V^\perp}(v-v')}^2\\
 \label{eq:tvv2}
                 &=\eta\left(\norm{P_V(z-z')}^2+\norm{P_{V^\perp}(v-v')}^2\right)\\
\label{eq:tvv22}
								 &\geq \eta \norm{P_V(z-z')}^2.
\end{align}
On the other hand, assumption A3) and \eqref{eq:inc.ttv} imply
\[
 \norm{[P_V(v)+P_{V^\perp}(z)]-[P_V(v')+P_{V^\perp}(z')]}\leq 
 L\norm{[P_V(z)+P_{V^\perp}(v)]-[P_V(z')+P_{V^\perp}(v')]},
\]
which, in particular, gives
\begin{align}
 \nonumber
 \norm{P_V(z-z')}^2+\norm{P_{V^\perp}(v-v')}^2&\geq 
\dfrac{1}{L^2}\left(\norm{P_{V}(v-v')}^2+\norm{P_{V^\perp}(z-z')}^2\right)\\
 \label{eq:tvv33}
     &\geq \dfrac{1}{L^2}\norm{P_{V^\perp}(z-z')}^2.
\end{align}
Using \eqref{eq:tvv22} and combining \eqref{eq:tvv2} and \eqref{eq:tvv33} we find, respectively,
\[
 \inner{z-z'}{v-v'}\geq \eta \norm{P_{V}(z-z')}^2,\quad
 L^2\inner{z-z'}{v-v'}\geq \eta \norm{P_{V^\perp}(z-z')}^2.
\]
The desired result now follows by adding the above inequalities
and by using the definition of $\mu>0$ in \eqref{eq:def.mu}. 
\end{proof}

Next, we show that Algorithm \ref{spin} is a special instance of Algorithm \ref{shpe}.


\begin{proposition}
 \label{pr:pi.e.hpe}
 Let $\{x_k\}$  and $\{y_k\}$ be generated by \emph{Algorithm \ref{spin}} and define
  \begin{align}
	  \label{eq:pi.e.hpe}
		 z_k=x_k+\gamma y_k\qquad \forall k\geq 0.
		 \end{align}
Then, for all $k\geq 1$,
\begin{align}
 \label{eq:pi.e.hpe012}
  &z_{k-1}-z_k=P_V(\gamma u_k)+P_{V^\perp}(\tilde x_k),\\
 \label{eq:pi.e.hpe02}
 & z_{k}=\left((\gamma T)_V+I\right)^{-1}z_{k-1}.
\end{align}
As a consequence of \eqref{eq:pi.e.hpe02}, we have that \emph{Algorithm \ref{spin}} is a special instance of  \emph{Algorithm \ref{shpe}} with $\lambda=1$ for solving \eqref{eq:inc.p} with $A=(\gamma T)_V$.
\end{proposition}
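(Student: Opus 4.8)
The plan is to unwind Step 1 of Algorithm \ref{spin} and recast the whole iteration as a single resolvent evaluation of the scaled partial inverse $(\gamma T)_V$. First I would extract the two elementary facts encoded in \eqref{eq:spin}. Since $x_{k-1}\in V$ and $y_{k-1}\in V^\perp$, the point $z_{k-1}=x_{k-1}+\gamma y_{k-1}$ has orthogonal components $P_V(z_{k-1})=x_{k-1}$ and $P_{V^\perp}(z_{k-1})=\gamma y_{k-1}$. The identity $\tilde x_k=(\gamma T+I)^{-1}z_{k-1}$ means $z_{k-1}-\tilde x_k\in\gamma T(\tilde x_k)$; since $\gamma u_k=z_{k-1}-\tilde x_k$ by \eqref{eq:spin}, this is exactly the pair of facts
\[
 z_{k-1}=\tilde x_k+\gamma u_k,\qquad u_k\in T(\tilde x_k).
\]
The membership $u_k\in T(\tilde x_k)$ is the crucial graph datum that will be fed into the partial inverse at the end.

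Next I would compute the $V$- and $V^\perp$-components of $z_k$. By \eqref{eq:spin2} and $y_k\in V^\perp$,
\[
 z_k=x_k+\gamma y_k=P_V(\tilde x_k)+P_{V^\perp}(\gamma u_k),
\]
so $P_V(z_k)=P_V(\tilde x_k)$ and $P_{V^\perp}(z_k)=P_{V^\perp}(\gamma u_k)$. Subtracting this from the full decomposition $z_{k-1}=P_V(\tilde x_k)+P_{V^\perp}(\tilde x_k)+P_V(\gamma u_k)+P_{V^\perp}(\gamma u_k)$ gives \eqref{eq:pi.e.hpe012} immediately, namely $z_{k-1}-z_k=P_V(\gamma u_k)+P_{V^\perp}(\tilde x_k)$.

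To establish \eqref{eq:pi.e.hpe02} it suffices to verify $z_{k-1}-z_k\in(\gamma T)_V(z_k)$, that is, $(z_k,z_{k-1}-z_k)\in G((\gamma T)_V)$. Reading off the components from the two displayed formulas, one has $P_V(z_{k-1}-z_k)=P_V(\gamma u_k)$ and $P_{V^\perp}(z_{k-1}-z_k)=P_{V^\perp}(\tilde x_k)$, so the two arguments appearing in the defining relation \eqref{eq:ema5} (applied to $\gamma T$) collapse cleanly:
\[
 P_V(z_{k-1}-z_k)+P_{V^\perp}(z_k)=\gamma u_k,\qquad P_V(z_k)+P_{V^\perp}(z_{k-1}-z_k)=\tilde x_k.
\]
Hence the membership required by \eqref{eq:ema5} is precisely $\gamma u_k\in(\gamma T)(\tilde x_k)$, i.e. $u_k\in T(\tilde x_k)$, which is exactly the datum recorded in the first step. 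This proves \eqref{eq:pi.e.hpe02}, and the final assertion follows on comparing $z_k=((\gamma T)_V+I)^{-1}z_{k-1}$ with the recursion \eqref{eq:ec} at $\lambda=1$ and $A=(\gamma T)_V$.

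I do not expect a genuine obstacle: the argument is pure bookkeeping with orthogonal projections. The only point that needs care is the exact matching of $V$- and $V^\perp$-parts so that the two arguments of the partial-inverse relation reduce respectively to $\gamma u_k$ and $\tilde x_k$; once the decompositions $z_{k-1}=\tilde x_k+\gamma u_k$ and $z_k=P_V(\tilde x_k)+P_{V^\perp}(\gamma u_k)$ are in place, the definition \eqref{eq:ema5} forces the conclusion.
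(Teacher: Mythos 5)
Your proposal is correct and follows essentially the same route as the paper's proof: both rest on the two facts $\gamma u_k\in(\gamma T)(\tilde x_k)$ and $\tilde x_k+\gamma u_k=z_{k-1}$ extracted from \eqref{eq:spin}, obtain \eqref{eq:pi.e.hpe012} by the same projection bookkeeping, and then match the components of $(z_k,\,z_{k-1}-z_k)$ against the defining relation \eqref{eq:ema5} for $(\gamma T)_V$ (the paper reads this relation forward from the graph datum, you verify it backward from the target membership, which is the identical computation). No gaps; the implicit use of single-valuedness of the resolvent of the maximal monotone operator $(\gamma T)_V$ in passing from $z_{k-1}-z_k\in(\gamma T)_V(z_k)$ to \eqref{eq:pi.e.hpe02} is the same step the paper calls ``clearly equivalent.''
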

\begin{proof}
 Using \eqref{eq:spin},  we obtain  $\gamma u_k\in (\gamma T)(\tilde x_k)$ and, as a consequence, from
\eqref{eq:ema5}, we have
\begin{align}
 \label{eq:ema20}
 P_V(\gamma u_k)+P_{V^\perp}(\tilde x_k)\in (\gamma T)_V(P_V(\tilde x_k)+P_{V^\perp}(\gamma u_k)).
\end{align}
From the second identity in \eqref{eq:spin}, we have 
$\gamma u_k+\tilde x_k=x_{k-1}+\gamma y_{k-1}$, which combined with \eqref{eq:spin2} gives
\[
 P_V(\gamma u_k)=x_{k-1}-x_k,\qquad P_{V^\perp}(\tilde x_k)=\gamma(y_{k-1}-y_k),
\]
which, in turn, is  equivalent to \eqref{eq:pi.e.hpe012}.
Using \eqref{eq:ema20}, \eqref{eq:pi.e.hpe012}, \eqref{eq:pi.e.hpe} and \eqref{eq:spin2}, we find 
$z_{k-1}-z_k\in (\gamma T)_V(z_k)$, which is clearly equivalent to \eqref{eq:pi.e.hpe02}.
The last statement of the proposition follows directly from \eqref{eq:pi.e.hpe02} and Algorithm \ref{shpe}'s definition.
\end{proof}

In the next theorem, we present the main contribution of this note, namely, convergence rates for the SPDG algorithm obtained
within the original Spingarn's partial inverse framework.

\begin{theorem} 
\label{th:4}
Let $\{x_k\}$, $\{y_k\}$, $\{\tilde x_k\}$ and $\{u_k\}$ be generated by \emph{Algorithm \ref{spin}}, let 
$(x^*,u^*)$ be the (unique) solution of  \eqref{eq:inc.v}
and let $d_0$ be as in \eqref{eq:def.d0}.
Then, for all $k\geq 1$,
\begin{align}
 \label{eq:ema13}
 \norm{x_{k-1}-x_k}^2+\gamma^2\norm{y_{k-1}-y_k}^2 &\leq 
 \left(1-\dfrac{2\gamma \eta}{(1+\gamma L)^2-2\gamma(L-\eta)}\right)^{k-1} d_{0}^{\,2},\\
 \label{eq:ema11}
 \norm{\tilde x_k-P_V(\tilde x_k)}^2+\gamma^2\norm{u_k-P_{V^\perp}(u_k)}^2 &\leq 
 \left(1-\dfrac{2\gamma \eta}{(1+\gamma L)^2-2\gamma(L-\eta)}\right)^{k-1} d_{0}^{\,2},\\
\label{eq:ema12}
 \norm{x^*-x_k}^2+\gamma^2\norm{u^*-y_k}^2 &\leq 
 \left(1-\dfrac{2\gamma \eta}{(1+\gamma L)^2-2\gamma(L-\eta)}\right)^{k} d_{0}^{\,2}.
\end{align}
\end{theorem}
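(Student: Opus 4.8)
The plan is to exploit the two structural results already in hand: Proposition \ref{pr:pi.e.hpe}, which identifies Algorithm \ref{spin} with the exact proximal point iteration (Algorithm \ref{shpe}) applied to $A=(\gamma T)_V$ with $\lambda=1$, and Proposition \ref{pr:tvstr}, which yields strong monotonicity of a partial inverse. Once these are combined, I expect all three estimates \eqref{eq:ema13}, \eqref{eq:ema11} and \eqref{eq:ema12} to follow from the generic convergence bounds \eqref{eq:ema} and \eqref{eq:ema2} of Proposition \ref{pr:mmm}, after rewriting the norms appearing there in terms of the SPDG iterates.

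First I would pin down the strong monotonicity modulus of the operator $A=(\gamma T)_V$ to which the PP method is applied. Since $T$ is $\eta$-strongly monotone and $L$-Lipschitz by A2)--A3), the scaled operator $\gamma T$ is $(\gamma\eta)$-strongly monotone and $(\gamma L)$-Lipschitz; applying Proposition \ref{pr:tvstr} to $\gamma T$ in place of $T$ therefore gives that $(\gamma T)_V$ is $\mu$-strongly monotone with $\mu=\gamma\eta/(1+\gamma^2 L^2)$. Substituting $\lambda=1$ and this value of $\mu$ into the contraction factor $1-2\lambda\mu/(1+2\lambda\mu)$ of Proposition \ref{pr:mmm}, and using the elementary identity $(1+\gamma L)^2-2\gamma(L-\eta)=1+\gamma^2 L^2+2\gamma\eta$, I expect to recover exactly the factor $1-2\gamma\eta/[(1+\gamma L)^2-2\gamma(L-\eta)]$ common to all three estimates.

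The remaining work is to translate $\|z^*-z_k\|$ and $\|z_{k-1}-z_k\|$ from Proposition \ref{pr:mmm} into the left-hand sides of \eqref{eq:ema13}--\eqref{eq:ema12}. Writing $z_k=x_k+\gamma y_k$ as in \eqref{eq:pi.e.hpe} and noting that the unique zero of $(\gamma T)_V$ is $z^*=x^*+\gamma u^*$ (directly from the definition \eqref{eq:ema5} of the partial inverse together with the fact that $(x^*,u^*)$ solves \eqref{eq:inc.v}), the key observation is that in each case the two summands lie in the orthogonal subspaces $V$ and $V^\perp$, so the norm splits by the Pythagorean identity. For $z^*-z_k=(x^*-x_k)+\gamma(u^*-y_k)$ this gives $\|z^*-z_k\|^2=\|x^*-x_k\|^2+\gamma^2\|u^*-y_k\|^2$, while for $k=0$ the same splitting identifies $\|z^*-z_0\|^2$ with $d_0^{\,2}$ in \eqref{eq:def.d0}; combined with \eqref{eq:ema2} this yields \eqref{eq:ema12}. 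For the step, \eqref{eq:pi.e.hpe012} together with the identities $P_V(\gamma u_k)=x_{k-1}-x_k$ and $P_{V^\perp}(\tilde x_k)=\gamma(y_{k-1}-y_k)$ from the proof of Proposition \ref{pr:pi.e.hpe} gives $\|z_{k-1}-z_k\|^2=\|x_{k-1}-x_k\|^2+\gamma^2\|y_{k-1}-y_k\|^2$, which is the left-hand side of \eqref{eq:ema13}; and since the very same orthogonal sum reads $\|z_{k-1}-z_k\|^2=\|P_{V^\perp}(\tilde x_k)\|^2+\|P_V(\gamma u_k)\|^2=\|\tilde x_k-P_V(\tilde x_k)\|^2+\gamma^2\|u_k-P_{V^\perp}(u_k)\|^2$, it simultaneously equals the left-hand side of \eqref{eq:ema11}. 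Both then follow from \eqref{eq:ema}.

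This argument carries no deep difficulty, since the analytic content is already packaged in Propositions \ref{pr:tvstr}, \ref{pr:pi.e.hpe} and \ref{pr:mmm}; the main point requiring care is purely bookkeeping -- correctly tracking the scaled constants $\gamma\eta$ and $\gamma L$ through the partial-inverse modulus, and verifying that the three apparently different left-hand sides are in fact the two Pythagorean splittings of $\|z_{k-1}-z_k\|^2$ and $\|z^*-z_k\|^2$. In particular I would double-check that $\tilde x_k-P_V(\tilde x_k)=P_{V^\perp}(\tilde x_k)$ and $u_k-P_{V^\perp}(u_k)=P_V(u_k)$, so that $\gamma^2\|u_k-P_{V^\perp}(u_k)\|^2=\|P_V(\gamma u_k)\|^2$, which is exactly what makes \eqref{eq:ema11} coincide with \eqref{eq:ema13}.
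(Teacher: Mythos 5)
Your proposal is correct and follows essentially the same route as the paper's proof: both identify SPDG with the exact PP iteration for $A=(\gamma T)_V$ via Proposition \ref{pr:pi.e.hpe}, apply Proposition \ref{pr:tvstr} to $\gamma T$ to get $\mu=\gamma\eta/(1+\gamma^2L^2)$, feed this into Proposition \ref{pr:mmm} with $\lambda=1$, and recover the contraction factor from the identity $(1+\gamma L)^2-2\gamma(L-\eta)=1+\gamma^2L^2+2\gamma\eta$. Your Pythagorean bookkeeping for the three left-hand sides (in particular, that \eqref{eq:ema11} and \eqref{eq:ema13} are the same orthogonal splitting of $\norm{z_{k-1}-z_k}^2$ via $\tilde x_k-P_V(\tilde x_k)=P_{V^\perp}(\tilde x_k)$ and $\gamma u_k - P_{V^\perp}(\gamma u_k)=P_V(\gamma u_k)$) is exactly what the paper compresses into its final sentence.
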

\begin{proof}
First, note that, from \eqref{eq:ema5}, $(x^*,u^*)$ is a solution  of \eqref{eq:inc.v} if and only if $x^*+\gamma u^*=:z^*\in (\gamma T)_V^{-1}(0)$.
Using the last statement in Proposition \ref{pr:pi.e.hpe}, Proposition \ref{pr:tvstr} to the operator $\gamma T$ (which is
$(\gamma\eta)$-strongly monotone and $(\gamma L)$-Lipschitz continuous) and Proposition \ref{pr:mmm}, we conclude that
the inequalities \eqref{eq:ema} and \eqref{eq:ema2} hold with $z^*$ as above, $\lambda=1$, $z_k$ as in \eqref{eq:pi.e.hpe} and
\[
 \mu=\dfrac{\gamma \eta}{1+(\gamma L)^2}.
\]
Direct calculations yield (recall that $\lambda=1$)
%
\[
 \dfrac{2 \mu}{1+2\mu}=
\dfrac{2\gamma \eta}{(1+\gamma L)^2-2\gamma(L-\eta)}.
\]
Hence, \eqref{eq:ema13} and \eqref{eq:ema12} follow from  \eqref{eq:ema} and \eqref{eq:ema2}, respectively. To finish
the proof, it remains to prove \eqref{eq:ema11}. To this end, note that it follows from \eqref{eq:pi.e.hpe}, 
\eqref{eq:pi.e.hpe012}, \eqref{eq:ema13} and the facts that $P_{V^\perp}=I-P_V$ and $P_{V}=I-P_{V^\perp}$.
\end{proof}

\noindent
{\bf Remarks.}
\begin{enumerate}
\item[(i)] Analogously to first remark after Theorem \ref{th:mot}, one can easily verify that
$\gamma=1/L$ provides the best convergence speed in  \eqref{eq:ema13}--\eqref{eq:ema12}, in which case we find, respectively,

\begin{align}
 \label{eq:ema133}
 \norm{x_{k-1}-x_k}^2+\gamma^2\norm{y_{k-1}-y_k}^2 &\leq 
 \left(1-\dfrac{\eta}{\eta+L}\right)^{k-1} d_{0}^{\,2},\\
 \label{eq:ema111}
 \norm{\tilde x_k-P_V(\tilde x_k)}^2+\gamma^2\norm{u_k-P_{V^\perp}(u_k)}^2 &\leq 
 \left(1-\dfrac{\eta}{\eta+L}\right)^{k-1} d_{0}^{\,2},\\
\label{eq:ema122}
 \norm{x^*-x_k}^2+\gamma^2\norm{u^*-y_k}^2 &\leq 
\left(1-\dfrac{\eta}{\eta+L}\right)^{k} d_{0}^{\,2}.
\end{align}
\item[(ii)] Since $L\geq \eta$, it follows that the convergence speed obtained in \eqref{eq:ema133}--\eqref{eq:ema122} 
is potentially better (specially on ill-conditioned problems) than the optimal one
obtained for the SPDG algorithm in \cite{mah.qua.tao-prox.siam95} via fixed point techniques, namely \eqref{eq:ema100}. The same remark applies to the rates in \eqref{eq:ema13}--\eqref{eq:ema12}, when compared to the corresponding one in \eqref{eq:ema10}.
See Figure 1.
\item[(iii)] Note that \eqref{eq:ema111} (resp. \eqref{eq:ema122}) imply that, for a given tolerance $\rho>0$, 
the SPDG algorithm finds a pair $(x,u)$ (resp. $(x,\gamma y)$) satisfying the termination criterion \eqref{eq:inc.v02} 
(resp. $\norm{x^*-x}^2+\gamma^2\norm{u^*-y}^2\leq \rho$) after performing no more than
\begin{align}
 \label{eq:iter.com}
 2+\log\left(\dfrac{\eta+L}{L}\right)^{-1}\log\left(\dfrac{d_{0}^{\,2}}{\rho}\right)
\end{align}
iterations.
\item[(iv)] By taking $L=57$ and $\eta=9$  (see p. 462 in \cite{mah.qua.tao-prox.siam95}), we find 
$\log\left(\dfrac{2L}{2L-\eta}\right)^{-1}\approx 13$  and 
$\log\left(\dfrac{\eta+L}{L}\right)^{-1}\approx 7$  in \eqref{eq:ema500} and \eqref{eq:iter.com}, respectively. This shows that
the upper bound \eqref{eq:iter.com}, obtained in this note, is more accurate than the corresponding one \eqref{eq:ema500}, obtained
in \cite{mah.qua.tao-prox.siam95}.
\end{enumerate}

\begin{figure}[H]
\centering
\includegraphics[scale=.6]{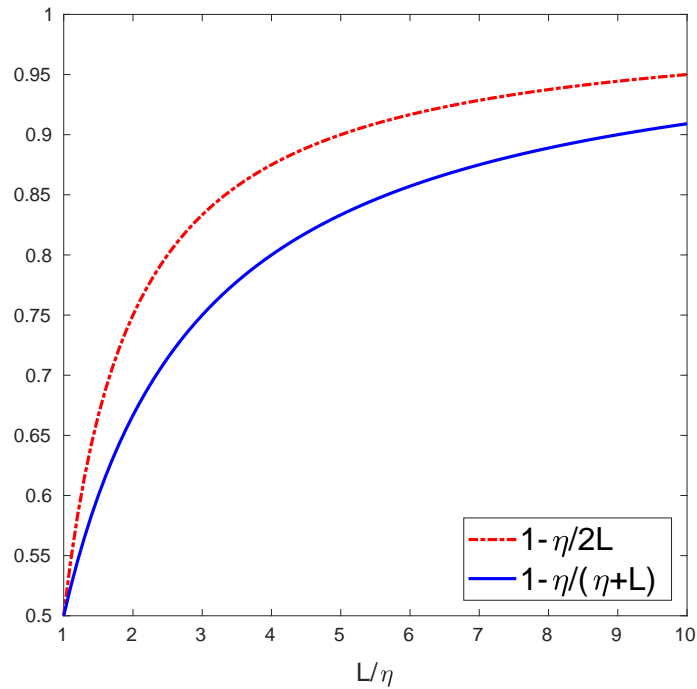}
\caption{Solid line: see the convergence rates \eqref{eq:ema133}--\eqref{eq:ema122}; dotted line: see \eqref{eq:ema100}.}
\label{figure1}
\end{figure}

\section{Concluding remarks}
 \label{sec:cr}

In this note, we proved that the SPDG algorithm of  Mahey, Oualibouch and Tao  
introduced in \cite{mah.qua.tao-prox.siam95}, can alternatively be analyzed within the Spingarn's partial inverse framework, instead of the
fixed point approach proposed in the latter reference. This traces back to the 1983 Spingarn's paper, where, among other contributions, he introduced and analyzed the partial inverse method. We simply proved that under the assumptions of \cite{mah.qua.tao-prox.siam95}, namely strong monotonicity and Lipschitz continuity, the Spingarn's partial inverse of the underlying maximal monotone operator
is strongly monotone as well. This allowed us to employ recent developments in the convergence analysis and iteration-complexity
of proximal point type methods for strongly monotone operators. By doing this, we additionally obtained a potentially better convergence speed for the SPDG algorithm as well as a better upper bound on the number of iterations needed to achieve prescribed tolerances.


\bibliographystyle{plain}

\end{document}